\definecolor{refkey}{rgb}{0,1,1}
\definecolor{labelkey}{rgb}{1,0,0}
\journal{arXiv}
\newtheorem{thm}{Theorem}
\newtheorem{lem}{Lemma}
\newtheorem{cor}{Corollary}
\newcommand{\eq} [1] {\begin{equation}\label{#1}\quad}
\newcommand{\en} {\end{equation}}
\newcommand{\scal}[1]{\langle#1\rangle}
\newcommand{\norm}[1]{\left\Vert#1\right\Vert}
\newcommand{\abs}[1]{\left\vert#1\right\vert}
\newcommand{\C}{\mathbb C}
\newcommand{\conv}{\operatorname{conv}}
\newcommand{\cl}{\operatorname{cl}}
\begin{document}

\begin{frontmatter}
%\ead{ims2@nyu.edu, ilya@math.wm.edu, imspitkovsky@gmail.com}

\title{A note on the maximal numerical range.\tnoteref{support}}
%----------Author 1

%\ead{ims2@nyu.edu, ilya@math.wm.edu, imspitkovsky@gmail.com}

\author{Ilya M. Spitkovsky}
%\ead{ims2@nyu.edu, ilya@math.wm.edu, imspitkovsky@gmail.com}
\address{Division of Science and Mathematics, New York  University Abu Dhabi (NYUAD), Saadiyat Island,
P.O. Box 129188 Abu Dhabi, United Arab Emirates}

\tnotetext[support]{Supported in part by Faculty Research funding from the Division of Science and Mathematics, New York University Abu Dhabi.
\\ \hspace*{.5cm} Email addresses: ims2@nyu.edu, ilya@math.wm.edu, imspitkovsky@gmail.com}
%\tnotetext[email]{Email addresses: %ims2@nyu.edu, ilya@math.wm.edu, imspitkovsky@gmail.com}

\begin{abstract}
We show that the maximal numerical range of an operator has a non-empty intersection with the boundary of its numerical range if and only
if the operator is normaloid. A description of this intersection is also given.
\end{abstract}

\end{frontmatter}

First, let us set some notation and terminology.

For a subset $X$ of the complex plane $\C$, by $\cl X$, $\partial X$, and $\conv X$ we will denote the closure, boundary, and the convex hull of $X$, respectively.
\iffalse Given a topological space $X$, for any $Y\subset X$ we will denote by $\cl Y$ and $\partial Y$ the closure and the boundary of $Y$. Most of the time the role of $X$ will be played by the complex field $\C$.\fi

By an ``operator'' we throughout the paper understand a bounded linear operator acting on a Hilbert space $\mathcal H$.
The {\em numerical range} of such an operator $A$ is defined by the formula
\[ W(A)=\{ \scal{Ax,x}\colon x\in{\mathcal H},\ \norm{x}=1\}, \]
where $\scal{.,.}$ and $\norm{.}$ stand, respectively, for the scalar product on $\mathcal H$
and the norm associated with it. Introduced a century ago in the works by Toeplitz \cite{Toe18}
and Hausdorrf \cite{Hau} (and thus also known as the {\em Toeplitz-Hausdorff set}), it since has been a subject
of intensive research. We mention here only \cite{GusRa} as a standard source of references, and note the following basic
properties:

Due to the Cauchy-Schwarz inequality, the set $W(A)$ is bounded. Namely,
\eq{w} w(A):=\sup\{ \abs{z}\colon z\in W(A)\}\leq\norm{A}; \en
$w(A)$ is called the {\em numerical radius} of $A$.\iffalse , and the operator $A$ is {\em normaloid} if inequality \eqref{w}
turns into an equality.\fi

The set $W(A)$ is convex (the Toeplitz-Hausdorff theorem), and if $\dim\mathcal H<\infty$ it is also closed.

A (relatively) more recent notion of the {\em maximal} numerical range $W_0(A)$ was introduced in \cite{Sta70} as the set of
all $\lambda\in\C$ for which there exist \eq{mw} x_n\in\mathcal H \text{ such that } \norm{x_n}=1, \ \norm{Ax_n}\to\norm{A},
\text{ and } \scal{Ax_n,x_n}\to\lambda.\en

It was also shown in \cite[Lemma 2]{Sta70} that $W_0(A)$ is convex, closed, and is contained in the closure of $W(A)$:
\eq{cont} W_0(A)\subset \cl W(A). \en
Observe that in the finite dimensional case $W_0(A)=W(B)$, where $B$ is the compression of $A$
onto the eigenspace of $A^*A$ corresponding to its maximal eigenvalue, so the above mentioned properties of the
maximal numerical range are rather straightforward.

Given the inclusion \eqref{cont}, it is natural to try to describe in more detail the positioning of $W_0(A)$
with respect to $W(A)$. In particular, what can be said about the points of $W_0(A)$ which lie on the boundary $\partial W(A)$ of $W(A)$?
%In this short note we characterize the operators $A$ for which \eq{b} W_0(A)\cap\partial W(A)\neq\emptyset.\en

We start by describing the intersection of $W_0(A)$ with the circle \[ {\cal C}_A:=\{ z\colon \abs{z}=\norm{A}\}.\]
\begin{lem}\label{l:nor}For any operator $A$,
\eq{int} W_0(A)\cap {\mathcal C}_A= \cl W(A)\cap {\mathcal C}_A= \sigma(A)\cap {\mathcal C}_A.\en
\end{lem}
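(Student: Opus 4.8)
The plan is to prove the two equalities in \eqref{int} by a chain of inclusions, exploiting the fact that points on the circle $\mathcal C_A$ have maximal modulus $\norm{A}$, which forces a great deal of rigidity.

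First I would establish $W_0(A)\cap\mathcal C_A\subset\sigma(A)\cap\mathcal C_A$. Suppose $\lambda\in W_0(A)$ with $\abs{\lambda}=\norm{A}$, and take a sequence $x_n$ as in \eqref{mw}, so $\norm{x_n}=1$, $\norm{Ax_n}\to\norm{A}$, and $\scal{Ax_n,x_n}\to\lambda$. The key computation is to expand
\[ \norm{Ax_n-\lambda x_n}^2=\norm{Ax_n}^2-2\Re(\bar\lambda\scal{Ax_n,x_n})+\abs{\lambda}^2. \]
As $n\to\infty$ the right-hand side tends to $\norm{A}^2-2\abs{\lambda}^2+\abs{\lambda}^2=\norm{A}^2-\abs{\lambda}^2=0$, since $\abs{\lambda}=\norm{A}=w(A)$ and $\scal{Ax_n,x_n}\to\lambda$ gives $\Re(\bar\lambda\scal{Ax_n,x_n})\to\abs{\lambda}^2$. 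Thus $(A-\lambda I)x_n\to 0$ with $\norm{x_n}=1$, so $A-\lambda I$ is not bounded below and $\lambda$ lies in the approximate point spectrum, hence in $\sigma(A)$; this places $\lambda\in\sigma(A)\cap\mathcal C_A$.

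Next I would prove $\sigma(A)\cap\mathcal C_A\subset\cl W(A)\cap\mathcal C_A$. This is the easy containment: it is classical that the boundary of $W(A)$, and more precisely the approximate point spectrum, lies in $\cl W(A)$, so any $\lambda\in\sigma(A)$ with $\abs{\lambda}=\norm{A}=w(A)$ must lie on $\cl W(A)$ (indeed it sits on $\partial\cl W(A)$, being of maximal modulus). More directly, if $\lambda\in\sigma(A)$ with $\abs{\lambda}=\norm{A}$ then by \eqref{w} we have $\abs{\lambda}=w(A)$, so $\lambda$ is a spectral point of maximal modulus; such a point is known to be a reducing approximate eigenvalue and hence belongs to $\cl W(A)$, giving $\lambda\in\cl W(A)\cap\mathcal C_A$.

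Finally I would close the loop with $\cl W(A)\cap\mathcal C_A\subset W_0(A)\cap\mathcal C_A$. Here I take $\lambda\in\cl W(A)$ with $\abs{\lambda}=\norm{A}$ and choose unit vectors $x_n$ with $\scal{Ax_n,x_n}\to\lambda$. Since $\abs{\scal{Ax_n,x_n}}\le\norm{Ax_n}\le\norm{A}$ and $\abs{\scal{Ax_n,x_n}}\to\abs{\lambda}=\norm{A}$, both inequalities must become asymptotic equalities, forcing $\norm{Ax_n}\to\norm{A}$; the same sequence then witnesses $\lambda\in W_0(A)$ via \eqref{mw}. The main obstacle I anticipate is the careful squeezing argument in this last step and in the first step: each relies on the observation that maximal modulus collapses the Cauchy-Schwarz slack, and one must verify that the limiting relations hold simultaneously for a single sequence rather than along different subsequences. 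Handling this cleanly, and correctly identifying which direction needs the approximate-point-spectrum description of $\sigma(A)$, is where the real content lies.
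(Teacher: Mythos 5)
Your proof is correct, and it is organized somewhat differently from the paper's. The paper disposes of the second equality $\cl W(A)\cap\mathcal{C}_A=\sigma(A)\cap\mathcal{C}_A$ by citing it as well known (Halmos, Problem 218), and then proves the first equality by combining the general inclusion $W_0(A)\subset\cl W(A)$ with exactly your third step: the Cauchy--Schwarz squeeze showing that $\scal{Ax_n,x_n}\to\lambda$ with $\abs{\lambda}=\norm{A}$ forces $\norm{Ax_n}\to\norm{A}$. You instead close a cyclic chain $W_0(A)\cap\mathcal{C}_A\subset\sigma(A)\cap\mathcal{C}_A\subset\cl W(A)\cap\mathcal{C}_A\subset W_0(A)\cap\mathcal{C}_A$, replacing the citation with a direct computation: expanding $\norm{Ax_n-\lambda x_n}^2$ and using $\norm{Ax_n}\to\norm{A}=\abs{\lambda}$ to exhibit $\lambda$ as an approximate eigenvalue, and then invoking only the elementary inclusion $\sigma(A)\subset\cl W(A)$ for the middle link. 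The computation is right (the limit of $\norm{Ax_n}^2-2\Re(\bar\lambda\scal{Ax_n,x_n})+\abs{\lambda}^2$ is indeed $0$), and your worry about needing a single sequence is unfounded in the third step, since the squeeze $\abs{\scal{Ax_n,x_n}}\le\norm{Ax_n}\le\norm{A}$ forces convergence along the whole sequence. What your route buys is self-containedness: the lemma no longer depends on the external fact that spectral points of maximal modulus lie in $\cl W(A)\cap\mathcal{C}_A$ beyond the standard spectral inclusion theorem; what the paper's route buys is brevity, since the cited result does the work of your first two steps at once.
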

\begin{proof} The second equality in \eqref{int} is well known \cite[Problem 218]{Hal82}. Due to \eqref{cont} it therefore remains to prove only
the inclusion of the middle term in the left hand side. To this end, observe that with any $\lambda\in \cl W(A)$ by definition
there is associated a sequence of unit vectors $x_n\in\mathcal H$ for which $\scal{Ax_n,x_n}\to\lambda$. If, in addition,
$\abs{\lambda}=\norm{A}$, then the Cauchy-Schwarz inequality implies that $\norm{Ax_n}\to\norm{A}$. In other words,
\eqref{mw} holds. \end{proof}
Recall that operators for which the second (and thus, equivalently, the third) term in \eqref{int} is non-empty are called
{\em normaloid}. Therefore, Lemma~\ref{l:nor} implies the sufficiency in the following

\begin{thm}\label{th:nor}The intersection $W_0(A)\cap\partial W(A)$ is non-empty if and only the operator $A$ is normaloid. \end{thm}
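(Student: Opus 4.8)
The plan is to treat sufficiency as essentially done and to put all the work into necessity. For sufficiency: if $A$ is normaloid then by definition the third term in \eqref{int} is non-empty, so Lemma~\ref{l:nor} furnishes a point $\lambda\in W_0(A)\cap\cl W(A)$ with $\abs{\lambda}=\norm{A}=w(A)$. Being a point of $\cl W(A)$ of maximal modulus, $\lambda$ cannot be interior (any neighbourhood would contain points of modulus exceeding $w(A)$), so $\lambda\in\partial W(A)$; thus $\lambda\in W_0(A)\cap\partial W(A)$, which is what the text already attributes to the Lemma.

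For necessity, I would start from $\lambda\in W_0(A)\cap\partial W(A)$ and fix a sequence $x_n$ as in \eqref{mw}. Since $\lambda\in\partial W(A)$ and $W(A)$ is convex there is a supporting line at $\lambda$; after replacing $A$ by $e^{-i\theta}A$ (which alters neither $\norm{A}$, $w(A)$, nor the normaloid property, and rotates $W(A)$, $W_0(A)$, $\mathcal{C}_A$ consistently) I may assume this line is $\{\Re z=s\}$ with $W(A)\subset\{\Re z\le s\}$ and $\Re\lambda=s$. Then $s=\sup\sigma(H)$ for $H=(A+A^*)/2$, so $H\le s$ and $\scal{(s-H)x_n,x_n}=s-\Re\scal{Ax_n,x_n}\to0$; hence $(s-H)^{1/2}x_n\to0$ and therefore $(H-s)x_n\to0$. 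Writing $K=(A-A^*)/(2i)$ and decomposing $Kx_n=k_nx_n+y_n$ with $y_n\perp x_n$, the identity $Ax_n=(s+ik_n)x_n+iy_n+o(1)$ yields $\norm{Ax_n}^2=s^2+k_n^2+\norm{y_n}^2+o(1)$. With $k_n\to t:=\im\lambda$ and $\norm{Ax_n}\to\norm{A}$ this gives $\norm{y_n}^2\to\rho^2:=\norm{A}^2-\abs{\lambda}^2$.

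The crux is to show this forces normaloidness. If $\rho=0$ then $\abs{\lambda}=\norm{A}$, so $\lambda\in\mathcal{C}_A$ and $A$ is normaloid. If $\rho\neq0$, I would pass to a subsequence and consider the $2\times2$ compressions $M_n$ of $A$ to $\Span\{x_n,\hat y_n\}$, with $\hat y_n=y_n/\norm{y_n}$; in the ordered basis $(x_n,\hat y_n)$ their entries converge to those of $M=\bigl(\begin{smallmatrix} s+it & i\rho\\ i\rho & \mu\end{smallmatrix}\bigr)$ for some $\mu$ with $\abs{\mu}\le\norm{A}$. Because $Ax_n$ lies in $\Span\{x_n,\hat y_n\}$ up to $o(1)$ and $\norm{Ax_n}\to\norm{A}$, the vector $e_1$ is a maximizer for $M$ with $\norm{M}=\norm{A}$, hence a top singular vector; this forces the $(2,1)$ entry of $M^*M$ to vanish, and since $\rho\neq0$ that gives $\mu=\overline{\lambda}$. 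A direct check then shows $M^*M=MM^*=(\abs{\lambda}^2+\rho^2)I$, so $M$ is normal, hence normaloid, with $w(M)=\norm{M}=\norm{A}$. But $W(M)\subset\cl W(A)$ gives $w(M)\le w(A)$, whence $w(A)=\norm{A}$ and $A$ is normaloid.

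The delicate step I expect to be the main obstacle is the passage to the limiting block $M$ together with the verification that $e_1$ is genuinely a top singular vector of $M$: this rests on the control $Ax_n\in\Span\{x_n,\hat y_n\}+o(1)$, which in turn comes from $(H-s)x_n\to0$. Everything after that is bookkeeping. The pleasant feature is that the boundary hypothesis forces the limiting $2\times2$ compression to be normal, so that the normaloidness of $A$ is inherited from that of a compression; identifying this normal block is the conceptual heart of the argument.
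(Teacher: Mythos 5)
Your proof is correct, and its overall architecture is the same as the paper's: both arguments form $2\times 2$ compressions of $A$ onto $\Span\{x_n,y_n\}$ for a suitable $y_n\perp x_n$, pass to a limit matrix, use the fact that $e_1$ is a norm-maximizing vector to kill the off-diagonal entry of $M^*M$, and conclude that the limit is either a scalar multiple of a unitary or degenerate (your case $\rho=0$, the paper's case $b=c=0$), whence $\cl W(A)\cap{\mathcal C}_A\neq\emptyset$ and $A$ is normaloid. The one genuine difference is how the equality of the two off-diagonal entries is obtained. The paper picks $y_n$ so that $Ax_n$ lies exactly in $\Span\{x_n,y_n\}$, and then gets $\abs{b}=\abs{c}$ from the cited fact \cite{ZDWD} that a diagonal entry of a $2\times2$ matrix can sit on the boundary of its numerical range only if the off-diagonal entries have equal moduli. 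You instead choose $y_n$ from the decomposition $A=H+iK$ together with the supporting line at $\lambda$, and the positivity argument $(s-H)x_n\to 0$ builds the symmetry $b=c=i\rho$ directly into the limit matrix; this makes the proof self-contained (no appeal to \cite{ZDWD}) at the cost of only having $Ax_n\in\Span\{x_n,\hat y_n\}+o(1)$, which, as you note, is still enough to make $e_1$ a maximizing vector of $M$. Two harmless details to record: in the sufficiency direction your ``maximal modulus implies not interior'' remark needs the trivial case $A=0$ set aside, and in the necessity direction one should observe that $\norm{y_n}\to\rho>0$ guarantees $\hat y_n$ is defined for large $n$.
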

{\em Proof of necessity.} To simplify the notation, without loss of generality suppose that $\norm{A}=1$; this can be arranged by an
appropriate scaling not having any effect on the validity of the statement. Then ${\mathcal C}_A$ is simply the unit circle $\mathbb T$.

If $W_0(A)\cap\partial W(A)\neq\emptyset$, then \eqref{mw} holds for some $\lambda\in\partial W(A)$. Choose a unit vectors $y_n$
orthogonal to $x_n$ and such that $Ax_n$ lies in the span $\mathcal L_n$ of $x_n$ and $y_n$. Then \[ Ax_n=a_nx_n+c_ny_n \] for some $a_n,c_n\in\C$,
\eq{ac} a_n\to\lambda, \text{ and} \abs{c_n}^2\to 1-\abs{\lambda}^2. \en Consider the compression of $A$ onto $\mathcal L_n$. Its matrix $A_n$
with respect to the basis $\{ x_n,y_n\}$ has $[a_n,c_n]^T$ as its first column; denote the second column of $A_n$ as $[b_n,d_n]^T$.

Passing to a subsequence if needed, we may suppose that \[ A_n\to A_0:=\begin{bmatrix} a & b\\ c & d\end{bmatrix}, \] where due to
\eqref{ac} $a=\lambda$ and $\abs{c}^2=1-\abs{\lambda}^2$.

Since $W(A_n)\subset W(A)$ for all $n=1,2,\ldots$, we have $W(A_0)\subset\cl W(A)$, and so the $(1,1)$-entry $a$ of $A_0$ lies on the boundary
of its numerical range. This is only possible if $\abs{b}=\abs{c}$, as was observed e.g. in \cite[Corollary 4]{ZDWD}, see also
\cite[Proposition 4.3]{GauWu13}. Moreover,
\eq{aa} A_0^*A_0=\begin{bmatrix} 1 & \overline{a}b+\overline{c}d \\ \overline{b}a+\overline{d}c & \abs{b}^2+\abs{d}^2\end{bmatrix}. \en
Since $\norm{A_0}=\lim\norm{A_n}\leq 1$, the matrix  $A_0^*A_0$ must be diagonal. When combined with the already established equality
$\abs{b}=\abs{c}$, this implies that either $b=c=0$, or $\abs{d}=\abs{a}$.

In the former case the normaloidness of $A$ is immediate, because then $\lambda\in\mathbb T$. In the latter case \eqref{aa} simplifies to
$A_0^*A_0=I$, i.e. $A_0$ is unitary. Then $\cl W(A)\cap\mathbb T\supset\sigma(A_0)\neq\emptyset$, also implying that $A$ is normaloid.
\qed

It follows from Theorem~\ref{th:nor} in particular that an operator $A$ is normaloid if and only if its numerical radius $w(A)$ coincides
with $w_0(A):=\max\{ \abs{z} \colon z\in W_0(A)\}$. This result was established in \cite[Corollary 1]{ChCh}. Moreover, the paper \cite{ChCh}
served as a motivation for the present note, and our proof of Theorem~\ref{th:nor} is making use of some reasoning from \cite{ChCh}.

A closer look at the proof of Theorem~\ref{th:nor} yields an explicit description of the set $W_0(A)\cap\partial W(A)$.
\begin{cor}\label{co:int}The intersection of $W_0(A)$ with the boundary of $W(A)$ consists of  $\sigma(A)\cap{\mathcal C}_A$ and the set ${\mathcal L}_A$ of all the chords of ${\mathcal C}_A$ lying on $\partial W(A)$:
\[ W_0(A)\cap\partial W(A)=\left(\sigma(A)\cap{\mathcal C}_A\right)\cup {\mathcal L}_A.\]
\end{cor}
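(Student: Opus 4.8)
The plan is to prove the two inclusions separately, keeping throughout the normalization $\norm{A}=1$ used in the proof of Theorem~\ref{th:nor}, so that $\mathcal{C}_A=\mathbb{T}$.

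For the inclusion $W_0(A)\cap\partial W(A)\subseteq(\sigma(A)\cap\mathcal{C}_A)\cup\mathcal{L}_A$, I would start from an arbitrary $\lambda\in W_0(A)\cap\partial W(A)$ and reuse the construction in the proof of Theorem~\ref{th:nor}: passing to a subsequence, the $2\times2$ compressions $A_n$ converge to a matrix $A_0=\begin{bmatrix} a&b\\ c&d\end{bmatrix}$ with $a=\lambda$, $\abs{c}^2=1-\abs{\lambda}^2$, $\abs{b}=\abs{c}$, and either $b=c=0$ or $A_0$ unitary. In the first case $\abs{\lambda}=1$, so $\lambda\in\mathcal{C}_A$, and Lemma~\ref{l:nor} places $\lambda$ in $\sigma(A)\cap\mathcal{C}_A$. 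In the remaining case $c\neq0$ forces $A_0$ to be a non-scalar unitary, whose numerical range is the genuine chord $[\mu_1,\mu_2]$ joining its two distinct eigenvalues $\mu_1,\mu_2\in\mathbb{T}$; since $\abs{\lambda}<1$, the point $\lambda=a\in W(A_0)$ lies strictly between $\mu_1$ and $\mu_2$.

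It then remains to upgrade ``$\lambda$ lies on a chord contained in $\cl W(A)$'' to ``$\lambda\in\mathcal{L}_A$'', and this is the step I expect to be the crux. Because $W(A_n)\subseteq W(A)$, we have $[\mu_1,\mu_2]=W(A_0)\subseteq\cl W(A)$, while $\lambda\in\partial W(A)$ by hypothesis. I would take a supporting line $\ell$ of the convex set $\cl W(A)$ at $\lambda$, given by a real-linear functional $f$ maximized over $\cl W(A)$ at $\lambda$. Writing $\lambda=t\mu_1+(1-t)\mu_2$ with $0<t<1$ and combining $f(\mu_1),f(\mu_2)\le f(\lambda)=tf(\mu_1)+(1-t)f(\mu_2)$ forces $f(\mu_1)=f(\mu_2)=f(\lambda)$, so both endpoints lie on $\ell$. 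Hence the entire chord lies in $\ell\cap\cl W(A)\subseteq\partial W(A)$, that is, $\lambda\in\mathcal{L}_A$.

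For the reverse inclusion I would argue directly. Any $z\in\sigma(A)\cap\mathcal{C}_A$ lies in $W_0(A)$ by Lemma~\ref{l:nor}, and it cannot be interior to $W(A)$ since it attains the maximal modulus $w(A)=\norm{A}$; thus $z\in W_0(A)\cap\partial W(A)$. Finally, for a chord of $\mathcal{C}_A$ lying on $\partial W(A)$, its endpoints belong to $\cl W(A)\cap\mathcal{C}_A=W_0(A)\cap\mathcal{C}_A\subseteq W_0(A)$ by Lemma~\ref{l:nor}, and the convexity of $W_0(A)$ then puts the whole chord into $W_0(A)$; as it lies on $\partial W(A)$ by assumption, this yields $\mathcal{L}_A\subseteq W_0(A)\cap\partial W(A)$ and completes the proof.
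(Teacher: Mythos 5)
Your proof is correct and follows exactly the route the paper intends: Corollary~\ref{co:int} is presented there as ``a closer look at the proof of Theorem~\ref{th:nor}'', and your case analysis of the limit matrix $A_0$ (either $b=c=0$, which forces $\abs{\lambda}=1$ and hence $\lambda\in\sigma(A)\cap{\mathcal C}_A$ via Lemma~\ref{l:nor}, or $A_0$ a non-scalar unitary whose numerical range is a genuine chord of ${\mathcal C}_A$ through $\lambda$) is precisely that closer look. The supporting-line argument upgrading ``chord contained in $\cl W(A)$'' to ``chord on $\partial W(A)$'', and the reverse inclusion via Lemma~\ref{l:nor} plus convexity of $W_0(A)$, are the details the paper leaves implicit, and both are sound.
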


Note that the endpoints of the above mentioned chords belong to $\sigma(A)\cap{\mathcal C}_A$. Considering by convention the remaining points of $\sigma(A)\cap{\mathcal C}_A$ as the endpoints of ``degenerate'' zero-length chords
of ${\mathcal C}_A$, we may say simply that $W_0(A)\cap\partial W(A)$ is exactly the set of all
chords of ${\mathcal C}_A$ lying on $\partial W(A)$.

Being convex, along with $\sigma(A)\cap{\mathcal C}_A$ the set $W_0(A)$ must also contain its convex hull
$\conv(\sigma(A)\cap{\mathcal C}_A)$. Since ${\mathcal L}_A\subset \conv(\sigma(A)\cap{\mathcal C}_A)$,
the equality \eq{ch} W_0(A)=\conv(\sigma(A)\cap{\mathcal C}_A)\en is plausible. It may fail, however, even in finite dimensions.

{\sl Example.} Let \[ B =\begin{bmatrix} 0 & 1 & 0 \\ 0 & 0 & 0 \\ 0 & 0 & 1 \end{bmatrix}. \]
Then $\norm{B}=1$ is attained on the 2-dimensional span of the standard basis vectors $e_2,e_3$. The compression of $A$ onto their span is
the matrix $\begin{bmatrix} 0 & 0 \\ 0 & 1\end{bmatrix}$, and so $W_0(B)$ is the line segment $[0,1]$. On the other hand, $W(A)$ is the ice-cone shaped convex hull of the circle centered at the origin and having radius $1/2$ and the point $1$. In particular, $w(B)=1$, making $A$ normaloid.
In agreement with Corollary~\ref{co:int} we have (taking into consideration that ${\mathcal C}_B=\mathbb T$):
\[ W_0(B)\cap\partial W(B)=\sigma(B)\cap{\mathbb T}=\{1\}, \]
and so $\conv(\sigma(B)\cap{\mathbb T})=\{1\}$ is a proper subset of $W_0(B)$.

The situation changes if $A$ is normal and not merely normaloid.

\begin{thm}\label{th:nor1}Equality \eqref{ch} holds for normal operators $A$.\end{thm}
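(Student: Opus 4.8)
The plan is to invoke the spectral theorem and reduce the whole question to the behavior of scalar spectral measures, handling the two inclusions in \eqref{ch} separately. The inclusion $\conv(\sigma(A)\cap{\mathcal C}_A)\subseteq W_0(A)$ is already in hand: by Lemma~\ref{l:nor} the set $\sigma(A)\cap{\mathcal C}_A$ lies in $W_0(A)$, and since $W_0(A)$ is convex and closed it contains the convex hull, as was observed in the paragraph preceding the theorem. The real content is therefore the reverse inclusion $W_0(A)\subseteq\conv(\sigma(A)\cap{\mathcal C}_A)$.

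Set $r=\norm{A}$. Because $A$ is normal, $r=\max\{\abs{z}\colon z\in\sigma(A)\}$, so ${\mathcal C}_A=\{z\colon\abs{z}=r\}$ and $F:=\sigma(A)\cap{\mathcal C}_A=\{z\in\sigma(A)\colon\abs{z}=r\}$ is compact. Let $E$ be the spectral measure of $A$. For a unit vector $x$ I would introduce the probability measure $\mu_x(\cdot)=\scal{E(\cdot)x,x}$ supported on $\sigma(A)$, for which $\scal{Ax,x}=\int z\,d\mu_x(z)$ and $\norm{Ax}^2=\int\abs{z}^2\,d\mu_x(z)$. Now take $\lambda\in W_0(A)$ together with an associated sequence $x_n$ as in \eqref{mw}. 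The condition $\norm{Ax_n}\to r$ rewrites as $\int(r^2-\abs{z}^2)\,d\mu_{x_n}\to0$, and since the integrand is nonnegative, a Markov-type estimate shows that for every $\epsilon>0$ the mass $\mu_{x_n}$ assigns to $K_\epsilon:=\{z\in\sigma(A)\colon\abs{z}^2\ge r^2-\epsilon\}$ tends to $1$. Splitting $\int z\,d\mu_{x_n}$ over $K_\epsilon$ and its complement, the complementary part is bounded by $r\,\mu_{x_n}(\sigma(A)\setminus K_\epsilon)\to0$, while the part over $K_\epsilon$ equals $\mu_{x_n}(K_\epsilon)$ times a barycenter lying in the compact set $\conv(K_\epsilon)$. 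Passing to a subsequence, I conclude that $\lambda\in\conv(K_\epsilon)$, and this holds for every $\epsilon>0$.

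It then remains to verify the identity $\bigcap_{\epsilon>0}\conv(K_\epsilon)=\conv(F)$. The sets $K_\epsilon$ are compact, nested, and decrease to $F$, and a routine compactness argument shows that every open neighborhood of $F$ contains some $K_\epsilon$. The inclusion $\supseteq$ is clear. For $\subseteq$ I would argue by separation: a point $q\notin\conv(F)$ is strictly separated from the compact convex set $\conv(F)$ by a real-linear functional $\ell$, and since $\{z\colon\ell(z)<\max_F\ell+\eta\}$ is an open neighborhood of $F$, it contains $K_\epsilon$ for small $\epsilon$; hence $\max_{\conv(K_\epsilon)}\ell=\max_{K_\epsilon}\ell<\ell(q)$, so $q\notin\conv(K_\epsilon)$. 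This forces $\lambda\in\conv(F)=\conv(\sigma(A)\cap{\mathcal C}_A)$ and, combined with the first inclusion, proves \eqref{ch}.

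I expect the main obstacle to be genuinely infinite-dimensional: unlike the finite-dimensional remark preceding Lemma~\ref{l:nor}, one cannot simply realize $W_0(A)$ as $W(B)$ for the compression $B$ of $A$ onto the spectral subspace $E(F){\mathcal H}$, since that subspace may be $\{0\}$ when the maximal modulus is attained only on the continuous spectrum. The concentration-of-measure argument circumvents this, and the only delicate points are the limit bookkeeping in the splitting over $K_\epsilon$ and the nested-convex-hulls identity $\bigcap_\epsilon\conv(K_\epsilon)=\conv(F)$, where the separation argument carries the weight.
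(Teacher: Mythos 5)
Your proof is correct, and it rests on the same underlying mechanism as the paper's proof --- unit vectors with $\norm{Ax_n}\to\norm{A}$ must have their spectral mass concentrated near the circle ${\mathcal C}_A$ --- but the technical route is genuinely different. The paper fixes an open half-plane $\Pi\supset\sigma(A)\cap{\mathcal C}_A$, takes $\epsilon$ to be the (positive) distance from $\sigma(A)$ to the arc ${\mathcal C}_A\setminus\Pi$, and concludes in one stroke that $W_0(A)\subset\cl W(A_\epsilon)=\conv\sigma_\epsilon(A)\subset\Pi$, where $A_\epsilon$ is the restriction of $A$ to the spectral subspace of $\sigma_\epsilon(A)=\{\lambda\in\sigma(A)\colon\abs{\lambda}\geq\norm{A}-\epsilon\}$; the containment $W_0(A)\subset\cl W(A_\epsilon)$ is asserted as following from the definition of $W_0$, and the classical identity $\cl W(N)=\conv\sigma(N)$ for normal $N$ does the rest. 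You instead work with the scalar measures $\mu_{x_n}=\scal{E(\cdot)x_n,x_n}$: your Markov estimate is exactly the quantitative form of the concentration step the paper leaves implicit, and your barycenter computation re-proves the needed instance of $\cl W(N)=\conv\sigma(N)$ rather than citing it. The price of quantifying over all $\epsilon$ at once is the extra lemma $\bigcap_{\epsilon>0}\conv(K_\epsilon)=\conv(F)$, which you handle correctly by hyperplane separation together with the finite-intersection property of the nested compacta; the paper sidesteps this by letting the separating half-plane dictate a single $\epsilon$ up front. Both arguments are sound: yours is more self-contained and makes explicit the infinite-dimensional subtlety that the spectral subspace of $F$ itself may be trivial, while the paper's is shorter because it delegates the convexity bookkeeping to the known description of the closed numerical range of a normal operator.
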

\begin{proof} Due to the inclusion $W_0(A)\supset \sigma(A)\cap{\mathcal C}_A$ and the fact that both sides in \eqref{ch} are convex and
compact, it suffices to show that any open half-plane containing $\sigma(A)\cap{\mathcal C}_A$ also contains $W_0(A)$.

So, consider a half-plane $\Pi\supset\sigma(A)\cap{\mathcal C}_A$. The spectrum of $A$ is disjoint with the arc $\gamma={\mathcal C}_A\setminus\Pi$,
and the distance between $\gamma$ and $\sigma(A)$ is therefore positive. Denoting it by $\epsilon$, observe that
\[ \sigma_\epsilon(A):=\{\lambda\in\sigma(A)\colon \abs{\lambda}\geq \norm{A}-\epsilon\}\subset\Pi. \]
Let $A_\epsilon$ be the restriction of $A$ onto its spectral subset corresponding to $\sigma_\epsilon(A)$. The definition of $W_0(A)$ implies that
$W_0(A)\subset\cl W(A_\epsilon)$. On the other hand, the operator $A_\epsilon$ is normal along with $A$, and so
$\cl W(A_\epsilon)=\conv\sigma(A_\epsilon)=\conv\sigma_\epsilon(A)\subset\Pi$.
\end{proof}

Recall that an operator $A$ acting on a Hilbert space $\mathcal H$ is {\em subnormal} if there exists a Hilbert space $\mathcal G$ and operators
$B\colon {\mathcal G}\to {\mathcal H}$, $C\colon {\mathcal G}\to {\mathcal G}$ such that the operator
\[ N:=\begin{bmatrix} A & B \\ 0 & C\end{bmatrix}\colon {\mathcal H}\oplus{\mathcal G}\to {\mathcal H}\oplus{\mathcal G}\]
is normal. As it happens, property \eqref{ch} extends from normal to subnormal operators.
\begin{cor}\label{co:sub}Equality \eqref{ch} holds for subnormal operators $A$.\end{cor}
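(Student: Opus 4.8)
The plan is to reduce the subnormal case to the normal case already handled in Theorem~\ref{th:nor1}, by passing to the \emph{minimal} normal extension $N$ of $A$ and transporting the relevant sets between $A$ and $N$. Since $A$ is subnormal it has such an extension, acting on $\mathcal H\oplus\mathcal G$ with $\mathcal H$ invariant and $A=N|_{\mathcal H}$, and I would invoke two classical facts about it: the spectral inclusion $\sigma(N)\subset\sigma(A)$, and Bram's description of $\sigma(A)$ as $\sigma(N)$ with some of the bounded components of its complement filled in (see e.g.\ Conway's monograph on subnormal operators). A first consequence I would record is that $N$ and $A$ have the same norm, hence the same critical circle $\mathcal C_A=\mathcal C_N$: indeed $\norm{A}\le\norm{N}$ since $A$ is a restriction of $N$, while $\norm{N}=r(N)\le r(A)\le\norm{A}$ using normality of $N$ together with $\sigma(N)\subset\sigma(A)$, where $r(\cdot)$ denotes the spectral radius.

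The key geometric step is the identity $\sigma(A)\cap\mathcal C_A=\sigma(N)\cap\mathcal C_N$. The inclusion $\supset$ is immediate from $\sigma(N)\subset\sigma(A)$. For $\subset$, I would argue that a peripheral point $\lambda$, with $\abs{\lambda}=\norm{A}=\norm{N}$, cannot be lost in the hole-filling: a bounded component of $\C\setminus\sigma(N)$ is open, so if $\lambda$ lay in one, an entire neighborhood of $\lambda$ would be contained in $\sigma(A)\subset\{\abs{z}\le\norm{N}\}$, contradicting $\abs{\lambda}=\norm{N}$. Hence every $\lambda\in\sigma(A)\cap\mathcal C_A$ already belongs to $\sigma(N)$, and taking convex hulls gives $\conv(\sigma(A)\cap\mathcal C_A)=\conv(\sigma(N)\cap\mathcal C_N)$.

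It then remains to transfer the maximal numerical range. Because $A=N|_{\mathcal H}$ and $\norm{A}=\norm{N}$, any sequence of unit vectors $x_n\in\mathcal H$ realizing a point $\lambda\in W_0(A)$ in the sense of \eqref{mw} is, viewed inside $\mathcal H\oplus\mathcal G$, a unit sequence with $\norm{Nx_n}=\norm{Ax_n}\to\norm{N}$ and $\scal{Nx_n,x_n}=\scal{Ax_n,x_n}\to\lambda$, so that $W_0(A)\subset W_0(N)$. Applying Theorem~\ref{th:nor1} to the normal operator $N$ and using the peripheral-spectrum identity, I would obtain
\[ W_0(A)\subset W_0(N)=\conv(\sigma(N)\cap\mathcal C_N)=\conv(\sigma(A)\cap\mathcal C_A). \]
The reverse inclusion is the elementary half already noted in the discussion preceding \eqref{ch}: Lemma~\ref{l:nor} gives $\sigma(A)\cap\mathcal C_A\subset W_0(A)$, and convexity of $W_0(A)$ upgrades this to $\conv(\sigma(A)\cap\mathcal C_A)\subset W_0(A)$. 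Combining the two inclusions yields \eqref{ch}.

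The step I expect to be the main obstacle is the peripheral-spectrum identity, since that is precisely where the structure of subnormality---beyond merely possessing some normal extension---must be used. An arbitrary normal extension could enlarge both the norm and the spectrum, so it is essential to work with the minimal extension and Bram's hole-filling theorem; once $\norm{A}=\norm{N}$ and $\sigma(A)\cap\mathcal C_A=\sigma(N)\cap\mathcal C_N$ are in hand, the rest is a restriction argument plus the already-proved normal case.
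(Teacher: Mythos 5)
Your proposal is correct and follows essentially the same route as the paper: pass to the minimal normal extension $N$, use $\norm{A}=\norm{N}$ to get $W_0(A)\subset W_0(N)$, identify the peripheral spectra $\sigma(A)\cap{\mathcal C}_A=\sigma(N)\cap{\mathcal C}_N$ via the hole-filling description, and then apply Theorem~\ref{th:nor1} to $N$ together with the trivial reverse inclusion. The only difference is that you spell out the justifications for $\norm{A}=\norm{N}$ and for the peripheral-spectrum identity that the paper simply cites as known properties of the minimal normal extension.
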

\begin{proof}
Consider the minimal normal extension $N$ of $A$, the existence and properties of which are discussed e.g. in \cite{Conw91,Hal82}. It is true
in particular that $\norm{A}=\norm{N}$. So, whenever a sequence of unit vectors $x_n\in\mathcal H$ is such that $\norm{Ax_n}\to\norm{A}$,
it at the same time satisfies $\norm{Nx_n}\to\norm{N}$. Consequently, \eq{wan} W_0(A)\subset W_0(N).\en

Furthermore, $\sigma(A)$ equals $\sigma(N)$ with some holes filled, and so \eq{san} \sigma(A)\cap{\mathcal C}_A=\sigma(N)\cap{\mathcal C}_N.\en
Combining \eqref{wan},\eqref{san} with the equality $W_0(N)=\conv(\sigma(N)\cap{\mathcal C}_N)$ which holds due to Theorem~\ref{th:nor1},
we obtain
\[ W_0(A)\subset W_0(N)=\conv(\sigma(N)\cap{\mathcal C}_N)=\conv(\sigma(A)\cap{\mathcal C}_A). \]
Since the converse inclusion holds for any $A$, we are done.
\end{proof}

\providecommand{\bysame}{\leavevmode\hbox to3em{\hrulefill}\thinspace}
\providecommand{\MR}{\relax\ifhmode\unskip\space\fi MR }
% \MRhref is called by the amsart/book/proc definition of \MR.
\providecommand{\MRhref}[2]{%
  \href{http://www.ams.org/mathscinet-getitem?mr=#1}{#2}
}
\providecommand{\href}[2]{#2}

\iffalse
\bibliographystyle{amsplain}
\bibliography{master}
\fi
\end{document}